\newtheorem{theorem}{Theorem}[section]
\newtheorem{proposition}[theorem]{Proposition}
\newtheorem{lemma}[theorem]{Lemma}
\title{On Number of Rich Words}
\author{Josef Rukavicka\thanks{Department of Mathematics,
Faculty of Nuclear Sciences and Physical Engineering, CZECH TECHNICAL UNIVERSITY
IN PRAGUE
(josef.rukavicka@seznam.cz).}}
\newtheorem{definition}[theorem]{Definition}
\theoremstyle{remark}
\newtheorem{remark}[theorem]{Remark}
\date{\small{January 25, 2017}\\
   \small Mathematics Subject Classification: 68R15}
\begin{document}
\maketitle

\begin{abstract}
Any finite word $w$ of length $n$ contains at most $n+1$ distinct palindromic factors. If the bound  $n+1$ is reached, the word $w$ is called rich. 
The number of rich words of length $n$ over an alphabet of cardinality $q$ is denoted $R_n(q)$.   For binary alphabet,  Rubinchik and Shur deduced  that  ${R_n(2)}\leq c 1.605^n $ for some constant $c$. 
We prove that $\lim\limits_{n\rightarrow \infty }\sqrt[n]{R_n(q)}=1$ for any $q$, i.e. $R_n(q)$ has  a subexponential
growth on any alphabet. 
\end{abstract}

\section{Introduction}
The study of palindromes is a frequent topic and many diverse results may be found.
In recent years, some of the papers deal with so-called \emph{rich} words, or also words having \emph{palindromic defect $0$}.
They are words that have the maximum number of palindromic factors.
As noted by \cite{DrJuPi}, a finite word $w$ can contains at most $|w|+1$ distinct palindromic factors with $|w|$ being the length of $w$.
The rich words are exactly those that attain this bound. It is known that on binary alphabet the set of rich words contains factors of Sturmian words, factors of complementary symmetric Rote words, factors of the period-doubling word, etc., see \cite{DrJuPi,BlBrLaVu11,Ba_phd,ScSh16}. On multiliteral alphabet, the set of rich words contains for example factors of Arnoux--Rauzy words and factors of words coding symmetric interval exchange.

Rich words can be characterized using various properties, see for instance \cite{GlJuWiZa,BuLuGlZa2,BaPeSta2}.
The concept of rich words can also be generalized to respect so-called pseudopalindromes, see \cite{PeSta2}.
In this paper we focus on an unsolved question of computing the number of rich words of length $n$ over an alphabet with $q>1$ letters. This number is denoted  $R_n(q)$. 

This question is investigated in \cite{Vesti2014}, where J. Vesti gives a recursive lower bound on the number of rich words of length $n$, and an upper bound on the number of binary rich words.
Both these estimates seem to be very rough.
In \cite{GuShSh15},  C. Guo, J. Shallit and A.M. Shur  constructed for each $n$  a large set of rich words of length $n$. Their construction gives,  currently,  the best lower bound on the number of binary rich words, namely 
$R_n(2)\geq \frac{C^{\sqrt{n}}}{p(n)}$, 
 where $p(n)$ is a polynomial and the constant $C \approx 37$. On the other hand, the best known upper bound is exponential.  As mentioned in \cite{GuShSh15},  calculation performed recently by M. Rubinchik provides the upper bound  $  R_n(2)\leq c 1.605^n$ for some constant $c$, see \cite{RuSh15}.

Our main result stated as Theorem  \ref{label_th_F} shows that $R_n(q)$ has  a subexponential
growth on any alphabet.  More precisely,   we prove  
$$\lim\limits_{n\rightarrow \infty}\sqrt[n]{R_n(q)}=1\,.$$
In \cite{Shur2012}, Shur calls languages with the above property small.
Our result  is   an argument in favor of a conjecture formulated in \cite{GuShSh15}  saying that  for some infinitely growing function $g(n)$ the following holds true  ${R_n(2)} = \mathcal{O} \Bigl(\frac{n}{g(n)}\Bigr)^{\sqrt{n}}$ .

To derive our result we consider a specific factorization of  a rich word into distinct rich palindromes, here called UPS-factorization (Unioccurrent Palindromic Suffix factorization),  see Definition  \ref{factorization}.  
Let us mention that  another palindromic factorizations have already been studied, see \cite{Bannai2015,FrPuZa}: Minimal (minimal number of palindromes), maximal (every palindrome cannot be extended on the given position) and diverse (all palindromes are distinct). Note that only the minimal palindromic factorization has to exist for every word.

The article is organized as follows: Section \ref{notation} recalls notation and known results. In Section \ref{faktor} we study a relevant property of  UPS-factorization. The last section is devoted to the proof of our main result.

\section{Preliminaries}\label{notation}
\noindent
Let us start with a couple of definitions:
Let $A$ be an alphabet of $q$ letters, where $q>1$ and $q\in \mathbb{N}$ ($\mathbb{N}$ denotes the set of nonnegative integers).
A finite sequence $u_1u_2\cdots u_n$ with $u_i \in A$ is a \emph{finite word}.
Its length is $n$ and is denoted $|u_1u_2\cdots u_n| = n$.
Let $A^n$ denote the set of words of length $n$. We define that $A^0$ contains just the empty word.
It is clear that the size of $A^n$ is equal to $q^n$. \\
Given $u=u_1u_2\cdots u_n \in A^n$ and $v=v_1v_2\cdots v_k \in A^k$ with $0\leq k \leq n$, we say that $v$ is a \emph{factor} of $u$ if there exists $i$ such that $0<i$, $i+k \leq n$ and $u_i=v_1$, $u_{i+1}=v_2$, $\dots$, $u_{i+k-1}=v_k$.
 \\
A word $u=u_1u_2\cdots u_n$ is called a \emph{palindrome} if $u_1u_2\cdots u_n=u_nu_{n-1}\cdots u_1$. The empty word is considered to be a palindrome and a factor of any word.

A word $u$ of length $n$  is called \emph{rich} if $u$ has $n+1$ distinct  palindromic factors. Clearly,  $u=u_1u_2\cdots u_n$  is rich if and only if its \emph{reversal} $ u_nu_{n-1}\cdots u_1$ is rich as well. \\

Any factor of a rich word is rich as well, see \cite{GlJuWiZa}. In other words,  the language of rich words is factorial. In particular it means that 
$R_n(q)R_m(q)\leq R_{n+m}(q)$ for any $m, n, q \in \mathbb{N}$. Therefore,  the Fekete's lemma  implies existence of the limit of  $\sqrt[n]{R_n(q)}$ and moreover 
\[
\lim\limits_{n\rightarrow \infty}\sqrt[n]{R_n(q)}= \inf \left \{\sqrt[n]{R_n(q)} \colon n \in \mathbb{N} \right\}.
\]
For a fixed $n_0$,   one can find the number of all rich words of length $n_0$ and obtain an upper bound on the limit. 
Using computer Rubinchik counted $R_n(2)$ for $n\leq 60$, (see the sequence A216264 in OEIS). As  $ \sqrt[60]{R_{60}(2)} < 1.605$, he obtained the  upper bound given in Introduction. 

\noindent 

As shown in \cite{GlJuWiZa},  any rich word $u$ over alphabet $A$ is richly prolongable, i.e., there exist  letters $a, b \in A$ such that $aub$ is also rich. Thus a rich word is a factor of an arbitrarily long rich word.  But the question whether two  rich words  can appear simultaneously as factors of a longer rich word may have  negative answer. It means that  the language of rich words is not recurrent.  This fact makes enumeration  of rich words hard.

\section{Factorization of rich words into rich palindromes}\label{faktor}
Let us recall one important property of rich words \cite[Definition $4$ and Proposition $3$]{DrJuPi}: the longest palindromic suffix of a rich word $w$ has exactly one occurrence in $w$ (we say that the longest palindromic suffix of $w$ is \emph{unioccurrent} in $w$).
It implies that $w=w^{(1)}w_1$, where $w_1$ is a palindrome which is not a factor of $w^{(1)}$. Since every factor of a rich word is a rich word as well, it follows that  $w^{(1)}$ is a rich word and thus $w^{(1)}=w^{(2)}w_2$, where $w_2$ is a palindrome which is not a factor of $w^{(2)}$.  Obviously $w_1\not =w_2$.  We can repeat the process until $w^{(p)}$ is the empty word for some $p\in \mathbb{N}$, $p\geq 1$. We express these ideas by the following lemma:

\begin{lemma}
\label{lemma_A}
Let $w$ be a rich word. There exist distinct non-empty palindromes $w_1,w_2,\dots,w_p$ such that
\begin{multline} \label{eqtn_001}
w=w_pw_{p-1}\cdots w_2w_1
 \mbox{ and }w_i \mbox{ is the longest palindromic suffix of }\\ w_pw_{p-1}\cdots w_i \mbox{ for }i=1,2,\dots ,p.
\end{multline}
\end{lemma}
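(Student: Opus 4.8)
The plan is to construct the factorization greedily, repeatedly stripping off the longest palindromic suffix, and to exploit the two facts recalled above: the language of rich words is factorial (every factor of a rich word is rich), and the longest palindromic suffix of a rich word is unioccurrent in it.

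Setting $w^{(0)}=w$, I would proceed by recursion. Given a non-empty rich word $w^{(i-1)}$, let $w_i$ be its longest palindromic suffix and write $w^{(i-1)}=w^{(i)}w_i$. The cited property guarantees that $w_i$ is unioccurrent in $w^{(i-1)}$, while factoriality guarantees that the prefix $w^{(i)}$ is again rich, so the construction can be iterated. Since each $w_i$ is non-empty, the lengths satisfy $|w^{(i)}|<|w^{(i-1)}|$, so after finitely many steps, say $p$, the word $w^{(p)}$ is empty. Unwinding the recursion yields $w=w_pw_{p-1}\cdots w_1$ together with the structural identity $w^{(i)}=w_pw_{p-1}\cdots w_{i+1}$, and by construction $w_i$ is the longest palindromic suffix of $w^{(i-1)}=w_pw_{p-1}\cdots w_i$, which is precisely the condition in (\ref{eqtn_001}).

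It remains to prove that the palindromes are pairwise distinct. Fix $i<j$. By unioccurrence, $w_i$ occurs in $w^{(i-1)}=w_p\cdots w_i$ only as the displayed suffix, so it is \emph{not} a factor of the remaining prefix $w^{(i)}=w_p\cdots w_{i+1}$. On the other hand, since $j\geq i+1$, the block $w_j$ appears literally as one of the consecutive factors composing $w^{(i)}$, hence $w_j$ is a factor of $w^{(i)}$. Therefore $w_i\neq w_j$, and pairwise distinctness follows.

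The only genuinely delicate point is this last distinctness step; the rest is a routine iteration once unioccurrence and factoriality are granted. The crux is to read unioccurrence as the statement ``the longest palindromic suffix is not a factor of what precedes it,'' and then to observe that every later block $w_j$ is contained in that preceding part, so an equality $w_i=w_j$ would manufacture a forbidden second occurrence. I would expect no further obstacle, since termination and the longest-suffix identity are immediate from the greedy construction.
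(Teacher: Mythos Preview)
Your proof is correct and follows essentially the same approach as the paper: the paper's argument (given in the paragraph preceding the lemma) also iteratively strips off the longest palindromic suffix, invokes unioccurrence to see that $w_i$ is not a factor of $w^{(i)}$, and uses factoriality of rich words to continue. Your treatment of distinctness for arbitrary $i<j$ is in fact more explicit than the paper's, which only notes $w_1\neq w_2$ and leaves the general case implicit.
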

\begin{definition}\label{factorization}
We define UPS-factorization (Unioccurrent Palindromic Suffix factorization) to be the factorization of a rich word $w$ into the form (\ref{eqtn_001}).
\end{definition}
Since $w_i$ in the factorization (\ref{eqtn_001}) are non-empty, it is clear that $p\leq n=\vert w\vert$. From the fact that the palindromes $w_i$ in  the factorization (\ref{eqtn_001}) are distinct we can derive a better upper bound for $p$. The aim of this section is to prove the following theorem:

\begin{theorem}
\label{theorem_A}
There is a constant $c>1$ such that for any rich word $w$ of length $n$ the number of palindromes in the UPS-factorization of $w=w_pw_{p-1}\cdots w_2w_1$ satisfies 
\begin{equation}
\label{eqtn_002}
p\leq c\frac{n}{\ln{n}}\mbox{.}
\end{equation}
\end{theorem}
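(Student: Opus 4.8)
The plan is to discard almost all of the combinatorial structure and reduce the statement to a pure counting estimate. By Lemma \ref{lemma_A} the factors $w_1,\dots,w_p$ are \emph{distinct} non-empty palindromes, and since they concatenate to $w$ we have $\sum_{i=1}^p |w_i| = n$. Hence it suffices to bound the largest possible number of pairwise distinct palindromes whose lengths add up to $n$. The one extra ingredient I need is that over an alphabet of $q$ letters a palindrome of length $\ell$ is completely determined by its first $\lceil \ell/2\rceil$ symbols, so there are exactly $q^{\lceil \ell/2\rceil}$ palindromes of each length $\ell$. Writing $n_\ell$ for the number of indices $i$ with $|w_i|=\ell$, distinctness gives $n_\ell \le q^{\lceil \ell/2\rceil}$, while the length identity reads $\sum_\ell \ell\, n_\ell = n$ and the quantity to be bounded is $p=\sum_\ell n_\ell$.

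Next I would split this sum at a threshold $L$ to be chosen later. For the short palindromes I would use the distinctness bound and sum the resulting geometric series,
\[
\sum_{\ell=1}^{L} n_\ell \le \sum_{\ell=1}^{L} q^{\lceil \ell/2\rceil} \le \frac{2q}{q-1}\, q^{(L+1)/2}.
\]
For the long palindromes I would instead use the length constraint: every such factor has $\ell \ge L+1$, so
\[
\sum_{\ell>L} n_\ell \le \frac{1}{L+1}\sum_{\ell>L} \ell\, n_\ell \le \frac{n}{L+1}.
\]
Adding the two pieces yields $p \le \frac{2q}{q-1}\, q^{(L+1)/2} + \frac{n}{L+1}$, valid for every integer $L\ge 1$.

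The final step is to balance the two terms by choosing $L = \lceil \log_q n\rceil$. Then $q^{(L+1)/2} \le q\sqrt{n}$, so the first term is $O(\sqrt n)$, while $L+1 \ge \log_q n = \ln n/\ln q$ makes the second term at most $n\ln q/\ln n$. Since $\sqrt n = o(n/\ln n)$, both contributions are bounded by a constant multiple of $n/\ln n$ for all large $n$, and the finitely many remaining small values of $n$ are absorbed by enlarging the constant (using the trivial bound $p\le n$). This gives $p \le c\, n/\ln n$ as claimed.

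I expect the only delicate point to be the choice of the threshold $L$: it must be logarithmic so that the exponential count of short palindromes stays negligible (of order $\sqrt n$) while the reciprocal weight $1/(L+1)$ in the long-palindrome bound is still as large as $\ln q/\ln n$. It is worth emphasizing that nothing beyond the distinctness of the $w_i$ and the length identity is used; the finer \emph{longest palindromic suffix} property of the UPS-factorization plays no role in this particular bound, although it is what guarantees distinctness in the first place.
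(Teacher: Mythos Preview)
Your argument is correct. Both the paper and you use only the two facts that the $w_i$ are pairwise distinct palindromes with $\sum_i |w_i|=n$ and that there are $q^{\lceil \ell/2\rceil}$ palindromes of each length $\ell$; the difference lies in how the resulting extremal problem is estimated. The paper takes the greedy route: it fixes the least $t$ with $\sum_{i\le t} i\,q^{\lceil i/2\rceil}\ge n$, invokes the sharp bound $p\le \sum_{i\le t} q^{\lceil i/2\rceil}$ (Lemma~\ref{lemma_B}), and then controls $t$ via a derivative-of-geometric-series estimate (Lemma~\ref{lemma_C}), which forces a somewhat awkward case split $q\ge 9$ versus $q<9$ and $t\ge 2$ versus $t=1$. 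Your threshold-splitting argument bypasses all of this: separating lengths $\le L$ from lengths $>L$ and taking $L=\lceil\log_q n\rceil$ gives the same order $n/\ln n$ with no auxiliary lemmas and no case analysis on $q$. As a bonus your leading constant is $\ln q$, noticeably smaller than the paper's $8q^{3/2}\ln q$; on the other hand, the paper's Lemma~\ref{lemma_B} is the exact solution of the underlying optimization problem (``pack the shortest palindromes first''), which is conceptually clean even if unnecessary for the $O(n/\ln n)$ conclusion.
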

Before proving the theorem, we show two auxiliary lemmas:
\begin{lemma}
\label{lemma_B}
Let $q,n,t\in \mathbb{N}$ such that
\begin{equation}
\label{eqtn_003}
\sum_{i=1}^tiq^{\lceil\frac{i}{2}\rceil}\geq n\mbox{.}
\end{equation} The number $p$ of palindromes in the UPS-factorization $w=w_pw_{p-1}\dots w_2w_1$ of any rich word $w$ with $n=\vert w\vert$ satisfies
\begin{equation}
\label{eqtn_004}
p\leq \sum_{i=1}^tq^{\lceil\frac{i}{2}\rceil}\mbox{.}
\end{equation}
\end{lemma}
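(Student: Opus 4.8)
The plan is to reduce this to a pure packing statement about distinct palindromes, using only two facts about the UPS-factorization. By Lemma \ref{lemma_A} the palindromes $w_1,w_2,\dots,w_p$ are pairwise distinct and non-empty, and obviously $\sum_{i=1}^p |w_i| = |w| = n$. The combinatorial input I need is the exact count of palindromes of a given length: a palindrome of length $\ell$ over an alphabet of $q$ letters is completely determined by its first $\lceil \ell/2 \rceil$ letters, and conversely any such choice extends uniquely to a palindrome. Hence there are exactly $q^{\lceil \ell/2 \rceil}$ palindromes of length $\ell$, so the number of palindromes of length at most $t$ is $\sum_{i=1}^t q^{\lceil i/2 \rceil}$ and their combined length is $\sum_{i=1}^t i\,q^{\lceil i/2 \rceil}$.

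With these facts, I would observe the key monotonicity principle: among all families of $p$ distinct non-empty palindromes, the total length is minimized by taking the $p$ shortest available ones, i.e. by first exhausting all $q^{\lceil 1/2\rceil}$ palindromes of length $1$, then all $q^{\lceil 2/2\rceil}$ of length $2$, and so on. This follows from a standard exchange argument: if a family contains a palindrome of length $\ell$ while some unused palindrome of length $\ell' < \ell$ exists, replacing the former by the latter strictly decreases the total length. I would record this as an elementary observation rather than belabor it.

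The proof then proceeds by contradiction. Suppose $p > \sum_{i=1}^t q^{\lceil i/2 \rceil}$. Since the $w_i$ are $p$ distinct palindromes, their total length $n$ is at least the total length of the $p$ shortest distinct palindromes. Because there are only $\sum_{i=1}^t q^{\lceil i/2 \rceil} < p$ palindromes of length at most $t$, the $p$ shortest palindromes consist of all palindromes of length at most $t$ together with at least one palindrome of length at least $t+1$; their total length is therefore at least $\sum_{i=1}^t i\,q^{\lceil i/2 \rceil} + (t+1)$. Consequently
\[
n = \sum_{i=1}^p |w_i| \;\geq\; \left( \sum_{i=1}^t i\,q^{\lceil i/2 \rceil} \right) + (t+1) \;>\; \sum_{i=1}^t i\,q^{\lceil i/2 \rceil} \;\geq\; n,
\]
where the last inequality is the hypothesis (\ref{eqtn_003}). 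This contradiction forces $p \leq \sum_{i=1}^t q^{\lceil i/2 \rceil}$, which is exactly (\ref{eqtn_004}).

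I do not expect a serious obstacle here, as the statement is essentially a greedy packing bound. The only point requiring a little care is the minimality claim of the second paragraph, namely that no family of $p$ distinct palindromes can be shorter in total than the family of the $p$ shortest palindromes; handled by the exchange argument above, the rest is a direct computation.
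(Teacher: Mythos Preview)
Your proof is correct and follows essentially the same approach as the paper: both hinge on the fact that $p$ distinct non-empty palindromes must have total length at least that of the $p$ shortest palindromes, combined with the count $q^{\lceil \ell/2\rceil}$ for palindromes of length $\ell$. The paper phrases this directly via an ordered enumeration $f_1,f_2,\ldots$ of all palindromes by length (so that $\sum_{i=1}^p|f_i|\le\sum_{i=1}^p|w_i|=n\le\sum_{i=1}^t i\,q^{\lceil i/2\rceil}$ forces $p$ to be at most the number of palindromes of length $\le t$), while you package the same idea as an exchange argument and finish by contradiction; the substance is identical.
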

\begin{proof}
Let $f_1,f_2,f_3,\dots$ be an infinite sequence of all non-empty palindromes over an alphabet $A$ with $q=\vert A\vert$ letters, where the palindromes are ordered in such a way that $i<j$ implies that $\vert f_i\vert \leq \vert f_j\vert$.
In consequence $f_1,\dots,f_q$ are palindromes of length $1$, $f_{q+1}, \dots,f_{2q}$ are palindromes of length $2$, etc.
Since $w_1,\dots,w_p$ are distinct non-empty palindromes we have $\sum_{i=1}^p\vert f_i\vert\leq \sum_{i=1}^p\vert w_i\vert=n$.
The number of palindromes of length $i$ over the alphabet $A$ with $q$ letters is equal to $q^{\lceil\frac{i}{2}\rceil}$ (just consider that that the ``first half'' of a palindrome determines the second half).
The number $\sum_{i=1}^tiq^{\lceil\frac{i}{2}\rceil}$ equals the length of a word concatenated from all palindromes of length less than or equal to $t$.
Since $\sum_{i=1}^p \vert f_i\vert \leq n \leq \sum_{i=1}^tiq^{\lceil\frac{i}{2}\rceil}$, it follows that the number of palindromes $p$ is less than or equal to the number of all palindromes of length at most $t$; this explains the inequality (\ref{eqtn_004}).
\end{proof}
\begin{lemma}
\label{lemma_C}
Let $N\in \mathbb{N}$, $x\in \mathbb{R}$, $x>1$ such that $N(x-1)\geq 2$. We have
\begin{equation}
\label{eqtn_005}
\frac{N x^{N}}{2(x-1)}\leq \sum_{i=1}^{N}ix^{i-1}\leq \frac{N x^{N}}{(x-1)}\mbox{.}
\end{equation}
\end{lemma}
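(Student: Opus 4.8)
The plan is to obtain an exact closed form for $S=\sum_{i=1}^{N}ix^{i-1}$ and then read off both inequalities from it, isolating the single place where the hypothesis $N(x-1)\ge 2$ is needed. First I would multiply $S$ by $x-1$ and telescope: writing $\sum_{i=1}^{N}ix^{i-1}=\sum_{j=0}^{N-1}(j+1)x^{j}$ and subtracting from $xS=\sum_{i=1}^{N}ix^{i}$, the middle coefficients collapse and one gets
\[
(x-1)S = Nx^{N}-\sum_{i=0}^{N-1}x^{i}=Nx^{N}-\frac{x^{N}-1}{x-1}.
\]
Dividing by $x-1>0$ yields the explicit identity $S=\frac{Nx^{N}}{x-1}-\frac{x^{N}-1}{(x-1)^{2}}$, which is the one relation driving the whole argument.

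The upper bound is then immediate: since $x>1$ forces $x^{N}>1$, the subtracted term $\frac{x^{N}-1}{(x-1)^{2}}$ is positive, so $S\le \frac{Nx^{N}}{x-1}$, which is exactly the right-hand inequality of (\ref{eqtn_005}).

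For the lower bound I would rewrite $\frac{Nx^{N}}{2(x-1)}\le S$ using the identity; after cancellation it becomes equivalent to
\[
\frac{Nx^{N}}{2}\ \ge\ \frac{x^{N}-1}{x-1}=\sum_{i=0}^{N-1}x^{i},
\]
that is, $Nx^{N}\ge 2\sum_{i=0}^{N-1}x^{i}$. This is where the hypothesis enters and is the heart of the matter. Estimating the geometric sum by $\sum_{i=0}^{N-1}x^{i}=\frac{x^{N}-1}{x-1}<\frac{x^{N}}{x-1}$ and invoking $N(x-1)\ge 2$ (equivalently $Nx^{N}\ge \frac{2x^{N}}{x-1}$) chains to $Nx^{N}\ge \frac{2x^{N}}{x-1}>\frac{2(x^{N}-1)}{x-1}=2\sum_{i=0}^{N-1}x^{i}$, closing the lower bound.

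The only genuine obstacle is spotting the correct reformulation of the lower bound: once it is reduced to comparing $Nx^{N}$ against twice a geometric sum, the condition $N(x-1)\ge 2$ is precisely what is required, and nothing sharper than $x^{N}-1<x^{N}$ is needed. I would finally check the boundary behaviour — the degenerate regime $x\to 1^{+}$ is excluded exactly by $N(x-1)\ge 2$, which both forces $N\ge 1$ and keeps $x$ away from $1$ relative to $N$ — but no analysis beyond these elementary estimates is anticipated.
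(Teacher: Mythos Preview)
Your proof is correct and essentially matches the paper's: both derive the identity $S=\frac{Nx^{N}}{x-1}-\frac{x^{N}-1}{(x-1)^{2}}$ (the paper by differentiating the geometric-series formula, you by telescoping), read off the upper bound from positivity of the subtracted term, and obtain the lower bound from $N(x-1)\ge 2$ together with dropping the ``$-1$'' in $x^{N}-1$. The only differences are cosmetic reorganizations of the same algebra.
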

\begin{proof}
The sum of the first $N$ terms of a geometric series with the quotient $x$ is equal to $\sum_{i=1}^{N}x^i=\frac{x^{N+1}-x}{x-1}$. Taking the derivative of this formula with respect to $x$ with $x>1$ we obtain:
$\sum_{i=1}^{N}ix^{i-1}=\frac{x^{N}(N(x-1)-1)+1}{(x-1)^2}=\frac{N x^{N}}{x-1}+\frac{1-x^{N}}{(x-1)^2}$.
It follows that the right inequality of (\ref{eqtn_005}) holds for all $N\in \mathbb{N}$ and $x>1$. The condition $N(x-1)\geq 2$ implies that $\frac{1}{2}N(x-1)\leq N(x-1)-1$, which explains the left inequality of (\ref{eqtn_005}).
\end{proof}
We can start the proof of Theorem \ref{theorem_A}:
\begin{proof}[Proof of Theorem \ref{theorem_A}]
Let $t\in \mathbb{N}$ be a minimal nonnegative integer such that the inequality (\ref{eqtn_003}) in Lemma \ref{lemma_B} holds. It means that:
\begin{equation}
\label{eqtn_006}
n>\sum_{i=1}^{t-1}iq^{\lceil\frac{i}{2}\rceil}\geq\sum_{i=1}^{t-1}iq^{\frac{i}{2}}=q^{\frac{1}{2}}\sum_{i=1}^{t-1}iq^{\frac{i-1}{2}}\geq \frac{(t-1)q^{\frac{t}{2}}}{2(q^{\frac{1}{2}}-1)}\mbox{,}
\end{equation}
where for the last inequality we exploited (\ref{eqtn_005}) with $N=t-1$ and $x=q^{\frac{1}{2}}$. If $q\geq 9$, then the condition $N(x-1)=(t-1)(q^{\frac{1}{2}}-1)\geq 2$ is fulfilled (it is the condition from Lemma \ref{lemma_C}) for any $t\geq 2$. Hence let us suppose that $q\geq 9$ and $t\geq 2$. From (\ref{eqtn_006}) we obtain:
\begin{equation}
\label{eqtn_007}
\frac{q^{\frac{t}{2}}}{q^{\frac{1}{2}}-1}\leq \frac{2n}{t-1}\leq \frac{4n}{t}\mbox{.}
\end{equation}
Since $t$ is such that the inequality (\ref{eqtn_003}) holds and $i\leq q^{\frac{i+1}{2}}$ for any $i\in \mathbb{N}$ and $q\geq 2$, we can write:
\begin{equation}
\label{eqtn_008}
n\leq \sum_{i=1}^t iq^{\frac{i+1}{2}}\leq \sum_{i=1}^t q^{i+1}=q^2\frac{q^t-1}{q-1}\leq\frac{q^2}{q-1}q^t\leq q^{2t}\mbox{.}
\end{equation}
We apply a logarithm on the previous inequality:
\begin{equation}
\label{eqtn_009}
\ln{n}\leq 2t\ln{q}\mbox{.}
\end{equation}
An upper bound for the number of palindromes $p$ in UPS-factorization follows from (\ref{eqtn_004}), (\ref{eqtn_007}), and (\ref{eqtn_009}):
\begin{equation}
\label{eqtn_10}
p\leq \sum_{i=1}^tq^{\lceil\frac{i}{2}\rceil}\leq \sum_{i=1}^tq^{\frac{i+1}{2}}\leq q^{\frac{3}{2}}\frac{q^{\frac{t}{2}}}{q^{\frac{1}{2}}-1} \leq q^{\frac{3}{2}}\frac{4n}{t}\leq q^{\frac{3}{2}}8\ln{q}\frac{n}{\ln{n}}\mbox{.}
\end{equation}
The previous inequality supposes that $q\geq 9$ and $t\geq 2$. If $t=1$ then we can easily derive from (\ref{eqtn_003}) that $n\leq q$ and consequently $p\leq n\leq q$. Thus the inequality $p\leq q^{\frac{3}{2}}8\ln{q}\frac{n}{\ln{n}}$ holds as well for this case.
Since every rich word over an alphabet with the cardinality $q<9$ is also a rich word over the alphabet with the cardinality $9$, the estimate (\ref{eqtn_002}) in Theorem \ref{theorem_A} holds if we set the constant $c$ as follows: $c=\max\{8q^{\frac{3}{2}}\ln{q}, 8 \cdot 9^{\frac{3}{2}}\ln{9}\}$.
\end{proof}

\begin{remark}
Theorem~\ref{theorem_A} implies that average length of a palindrome of UPS-factorization of a rich word of length $n$ is $\mathcal{O}(\ln(n))$.
Note that in \cite{RuSh16} it is shown that most of palindromic factors of a random word of length $n$ are of length close to $\ln(n)$.
\end{remark}

\section{Rich words form a small language}
The aim of this section is to show that the set of rich words forms a small language, see Theorem \ref{label_th_F}. \\

We present a recurrent inequality for $R_n(q)$. To ease our notation we omit the specification of the cardinality of alphabet and write 
$R_n$ instead of $R_n(q)$.

Denote $\kappa_n= \left \lceil c\frac{n}{\ln{n}} \right \rceil$, where $c$ is the constant from Theorem \ref{theorem_A} and $n\geq 2$.

\begin{theorem}
\label{label_th_D}
Let $n\geq 2$, then
\begin{equation}
\label{eqtn_11}
R_n\leq \sum_{p=1}^{\kappa_n}\sum_{\substack{n_1+n_2+\dots +n_p=n \\ n_1,n_2,\dots, n_p\geq 1}}R_{\lceil\frac{n_1}{2}\rceil}R_{\lceil\frac{n_2}{2}\rceil}\dots R_{\lceil\frac{n_p}{2}\rceil}\mbox{.}
\end{equation}
\end{theorem}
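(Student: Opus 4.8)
The plan is to exhibit an injection from the set of rich words of length $n$ into a collection of tuples that is counted by the right-hand side of \eqref{eqtn_11}, where each factor $R_{\lceil n_i/2\rceil}$ records the number of choices for the ``first half'' of the $i$-th palindrome in the UPS-factorization. The whole theorem is then simply the statement that the image of this map is contained in the indicated set of tuples.

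First I would recall that, by Lemma \ref{lemma_A} together with the unioccurrence of the longest palindromic suffix, every rich word $w$ admits a \emph{unique} UPS-factorization $w=w_pw_{p-1}\cdots w_1$: at each stage the block that is peeled off is forced to be the longest palindromic suffix, so the entire factorization is determined by $w$. Consequently the map $w\mapsto(w_1,w_2,\dots,w_p)$ is injective, and Theorem \ref{theorem_A} guarantees $p\leq\kappa_n$. Writing $n_i=|w_i|$, the lengths form a composition $n_1+n_2+\cdots+n_p=n$ with every $n_i\geq 1$, so the two sums in \eqref{eqtn_11} range over exactly the parameters $(p,n_1,\dots,n_p)$ that can arise.

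Next I would encode each block. Every $w_i$ is a palindromic factor of the rich word $w$, hence itself rich, since the language of rich words is factorial. A palindrome of length $n_i$ is completely determined by its prefix of length $\lceil n_i/2\rceil$, and this prefix is in turn a factor of $w_i$, so it is a rich word of length $\lceil n_i/2\rceil$. Thus each $w_i$ is specified by an element of a set of cardinality at most $R_{\lceil n_i/2\rceil}$, and for a fixed $p$ and a fixed composition $(n_1,\dots,n_p)$ the number of admissible tuples $(w_1,\dots,w_p)$ is at most $\prod_{i=1}^{p}R_{\lceil n_i/2\rceil}$. Summing over $p$ from $1$ to $\kappa_n$ and over all compositions of $n$ then yields \eqref{eqtn_11}.

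The step needing the most care is checking that the encoding genuinely over-counts rather than under-counts: the two facts that make this work are that the UPS-factorization is uniquely recoverable from $w$ (so no rich word is omitted and no two are identified), and that the ``first half'' really has length $\lceil n_i/2\rceil$ in both the even and odd cases, so that factoriality of rich words applies to each prefix. I do not expect the counting itself to pose an obstacle; once the injection is in place, the inequality is immediate.
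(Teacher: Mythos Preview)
Your proposal is correct and follows essentially the same argument as the paper: use the unique UPS-factorization to associate to each rich word a tuple $(p,n_1,\dots,n_p)$ with $p\le\kappa_n$, then bound the number of choices for each palindromic block $w_i$ by $R_{\lceil n_i/2\rceil}$ via its rich half-prefix. You are somewhat more explicit about the injectivity of the encoding and the factoriality step, but the structure and ingredients are identical to the paper's proof.
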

\begin{proof}
Given $p,n_1,n_2,\dots,n_p$, let $R(n_1,n_2,\dots, n_p)$ denote the number of rich words with UPS-factorization $w=w_pw_{p-1}\dots w_1$, where $\vert w_i\vert=n_i$ for $i=1,2,\dots,p$.
Note that any palindrome $w_i$ is uniquely determined by its prefix of length $\lceil\frac{n_i}{2}\rceil$; obviously this prefix is rich. Hence the number of words that appears in UPS-factorization as $w_i$ cannot be larger than $R_{\lceil\frac{n_i}{2}\rceil}$. It follows that $R(n_p,n_{p-1},\dots, n_1)\leq R_{\lceil\frac{n_1}{2}\rceil}R_{\lceil\frac{n_2}{2}\rceil}\dots R_{\lceil\frac{n_p}{2}\rceil}$. The sum of this result over all possible $p$ (see Theorem \ref{theorem_A}) and $n_1,n_2,\dots,n_p$ completes the proof.
\end{proof}

\begin{proposition}
\label{label_th_E}
If $h>1,K\geq 1$ such that $R_n\leq Kh^n$ for all $n$, then $\lim\limits_{n\rightarrow \infty} \sqrt[n]{R_n}\leq \sqrt{h}$.
\end{proposition}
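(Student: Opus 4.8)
The plan is to substitute the hypothesis $R_m\le Kh^m$ into the recurrence of Theorem~\ref{label_th_D} and then exploit the fact that the number of parts $p$ is capped at $\kappa_n=\lceil c\,n/\ln n\rceil$, which is subexponentially small; this cap is precisely what prevents the resulting sum from growing exponentially.

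First I would bound each factor occurring in the product. Since $\lceil n_i/2\rceil\le (n_i+1)/2$ and $h>1$, the hypothesis gives $R_{\lceil n_i/2\rceil}\le Kh^{(n_i+1)/2}$, so for a fixed composition with $n_1+\dots+n_p=n$ one obtains
\[
\prod_{i=1}^{p}R_{\lceil n_i/2\rceil}\le K^p h^{(n+p)/2}.
\]
The number of compositions of $n$ into $p$ positive parts is $\binom{n-1}{p-1}$, so Theorem~\ref{label_th_D} yields, writing $\alpha:=K\sqrt{h}$,
\[
R_n\le h^{n/2}\sum_{p=1}^{\kappa_n}\binom{n-1}{p-1}\alpha^p=:h^{n/2}S_n .
\]
Because $\sqrt[n]{R_n}\le\sqrt{h}\,\sqrt[n]{S_n}$, it then suffices to prove that $\sqrt[n]{S_n}\to 1$.

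To estimate the truncated sum $S_n$ I would examine the summand $f(p)=\binom{n-1}{p-1}\alpha^p$. Its ratio is $f(p+1)/f(p)=\frac{(n-p)\alpha}{p}$, which exceeds $1$ exactly while $p<\frac{\alpha}{1+\alpha}n$; since $\kappa_n\sim c\,n/\ln n$ lies eventually far below this threshold, $f$ is increasing on $\{1,\dots,\kappa_n\}$ and hence $S_n\le\kappa_n\binom{n-1}{\kappa_n-1}\alpha^{\kappa_n}$. Applying $\binom{n}{p}\le(en/p)^p$ with $p=\kappa_n$ and taking $n$-th roots, the governing term is
\[
\frac{\kappa_n}{n}\ln\!\Big(\frac{e n\alpha}{\kappa_n}\Big)=O\!\Big(\frac{\ln\ln n}{\ln n}\Big)\longrightarrow 0,
\]
where I used $\kappa_n/n\sim c/\ln n$ and $\ln(n/\kappa_n)\sim\ln\ln n$; together with $\sqrt[n]{\kappa_n}\to 1$ this gives $\sqrt[n]{S_n}\to 1$.

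Combining the two displays yields $\limsup_{n}\sqrt[n]{R_n}\le\sqrt{h}$, and since the limit of $\sqrt[n]{R_n}$ exists by Fekete's lemma (as recalled in Section~\ref{notation}), the proposition follows. The main obstacle is the truncated-sum estimate: the full sum $\sum_{p=1}^{n}\binom{n-1}{p-1}\alpha^p$ equals $\alpha(1+\alpha)^{n-1}$, which is genuinely exponential, so the entire argument hinges on the bound $\kappa_n=\mathcal{O}(n/\ln n)$ from Theorem~\ref{theorem_A} forcing $\kappa_n/n\to 0$ and thereby neutralising the binomial entropy factor.
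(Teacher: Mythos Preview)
Your proof is correct and follows essentially the same route as the paper's: substitute the hypothesis into Theorem~\ref{label_th_D}, bound each product by $K^p h^{(n+p)/2}$, count compositions as $\binom{n-1}{p-1}$, and then show that the resulting sum contributes only a subexponential factor thanks to $\kappa_n=o(n)$. The only cosmetic difference is that the paper pulls out the uniform bound $K^{\kappa_n}h^{(n+\kappa_n)/2}$ first and then applies the partial-sum estimate $\sum_{i\le L}\binom{N}{i}\le(eN/L)^L$, whereas you keep the factor $\alpha^p$ inside and use monotonicity of the summand up to $p=\kappa_n$; both devices yield the same final bound $(en/\kappa_n)^{\kappa_n}$ on the binomial part.
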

\begin{proof}
For any integers $p,n_1,\dots,n_p\geq 1$, the assumption implies  that\\
$R_{\lceil\frac{n_1}{2}\rceil}R_{\lceil\frac{n_2}{2}\rceil}\dots R_{\lceil\frac{n_p}{2}\rceil}\leq K^ph^{\frac{n_1+1}{2}}h^{\frac{n_2+1}{2}}\dots h^{\frac{n_p+1}{2}} \leq K^ph^{\frac{n+p}{2}}$.
Exploiting (\ref{eqtn_11}) we obtain:
\begin{equation}
\label{eqtn_12}
R_n\leq K^{\kappa_n}h^{\frac{n+\kappa_n}{2}}\sum_{p=1}^{\kappa_n}\sum_{\substack{n_1+n_2+\dots +n_p=n \\ n_1,n_2,\dots, n_p\geq 1}} 1\mbox{.}
\end{equation}
The sum $$S_n=\sum_{\substack{n_1+n_2+\dots +n_p=n \\ n_1,n_2,\dots, n_p\geq 1}}1$$ can be interpreted as the number of ways how to distribute $n$ coins between $p$ people in such a way that everyone has at least one coin. That is why $S_n=\binom{n-1}{p-1}$.\\
It is known (see Appendix for the proof) that
\begin{equation}
\label{eqtn_13}
\sum_{i=0}^L\binom{N}{i}\leq \left(\frac{eN}{L}\right)^L\mbox{, for any }L,N\in \mathbb{N} \mbox{ and } L\leq N\mbox{.}
\end{equation}
From (\ref{eqtn_12}) we can write:
$R_n\leq K^{\kappa_n}h^{\frac{n+\kappa_n}{2}}\binom{en}{\kappa_n}^{\kappa_n}$. To evaluate $\sqrt[n]{R_n}$, just recall that $\lim\limits_{n\rightarrow \infty}(const)^{\frac{\kappa_n}{n}}=\lim\limits_{n\rightarrow \infty}(const)^{\frac{c}{\ln{n}}}=1$ for any constant $const$ and moreover $\lim\limits_{n\rightarrow \infty}\left(\frac{n}{\kappa_n}\right)^{\frac{\kappa_n}{n}}=\lim\limits_{n\rightarrow \infty}(c\ln{n})^{\frac{1}{c\ln{n}}}=1$.
\end{proof}

The main theorem of this paper is a simple consequence of the previous proposition.
\begin{theorem}
\label{label_th_F}
Let $R_n$ denote the number of rich words of length $n$ over an alphabet with $q$ letters.
We have $\lim\limits_{n\rightarrow \infty}\sqrt[n]{R_n}=1$.
\end{theorem}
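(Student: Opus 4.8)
The plan is to combine Proposition \ref{label_th_E} with a bootstrap (fixed-point) argument, and it closes the theorem almost immediately. Write $L=\lim_{n\rightarrow\infty}\sqrt[n]{R_n}$; this limit exists by Fekete's lemma as recalled in Section \ref{notation}. The lower bound $L\geq 1$ is immediate: for any letter $a$ the word $a^n$ is rich (its palindromic factors are $\varepsilon,a,aa,\dots,a^n$, which are $n+1$ distinct palindromes), so $R_n\geq 1$, whence $\sqrt[n]{R_n}\geq 1$ for every $n$ and thus $L\geq 1$.

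For the upper bound I would argue by contradiction, supposing $L>1$. The key observation is that Proposition \ref{label_th_E} converts an exponential bound with base $h$ into the conclusion $L\leq\sqrt{h}$, and the only fixed point of $h\mapsto\sqrt{h}$ on $[1,\infty)$ is $1$. Concretely, fix $\varepsilon>0$. Since $\sqrt[n]{R_n}\to L$, there is an $N$ with $R_n\leq (L+\varepsilon)^n$ for all $n\geq N$. For the finitely many $n<N$ the trivial bound $R_n\leq q^n$ lets me absorb them into a constant, so that $R_n\leq K(L+\varepsilon)^n$ holds for \emph{all} $n$ with some $K\geq 1$ (take, e.g., $K=\max\{1,q^N\}$, using $(L+\varepsilon)^n\geq 1$). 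Now apply Proposition \ref{label_th_E} with $h=L+\varepsilon$, which satisfies $h>1$ because $L\geq 1$, to conclude $L\leq\sqrt{L+\varepsilon}$.

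Letting $\varepsilon\rightarrow 0^{+}$ yields $L\leq\sqrt{L}$, i.e.\ $L^2\leq L$; since $L>0$ this forces $L\leq 1$, contradicting the assumption $L>1$. Hence $L\leq 1$, and combined with $L\geq 1$ we obtain $L=1$, as claimed. There is no genuine obstacle once Proposition \ref{label_th_E} is available; the only point requiring a little care is the passage from the asymptotic statement $\sqrt[n]{R_n}\to L$ to a bound $R_n\leq Kh^n$ valid for \emph{all} $n$, which is precisely the hypothesis format demanded by Proposition \ref{label_th_E}, and this is dispatched by enlarging the constant $K$ to cover the initial segment $n<N$ through $R_n\leq q^n$.
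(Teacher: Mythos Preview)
Your proof is correct and follows essentially the same approach as the paper's: assume $L>1$, use the limit to produce a uniform bound $R_n\le K(L+\varepsilon)^n$, invoke Proposition~\ref{label_th_E} to get $L\le\sqrt{L+\varepsilon}$, and derive a contradiction. The only cosmetic differences are that the paper fixes $\varepsilon$ with $\lambda+\varepsilon<\lambda^2$ at the outset (so that $\sqrt{\lambda+\varepsilon}<\lambda$ is immediate) rather than passing to the limit $\varepsilon\to0$, and the paper takes $K=\max\{R_1,\dots,R_{n_0}\}$ instead of your $K=\max\{1,q^N\}$; neither difference is substantive.
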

\begin{proof}
Let us suppose that $\lim_{n\rightarrow \infty}\sqrt[n]{R_n}=\lambda>1$. We are going to find $\epsilon>0$ such that $\lambda+\epsilon<\lambda^2$. The definition of a limit implies that there is $n_0$ such that $\sqrt[n]{R_n}<\lambda+\epsilon$ for any $n>n_0$, i.e. $R_n<(\lambda+\epsilon)^n$. Let $K=\max\{R_1,R_2,\dots,R_{n_0}\}$. It holds for any $n\in \mathbb{N}$ that $R_n\leq K(\lambda+\epsilon)^n$. Using Proposition \ref{label_th_E} we obtain $\lim\limits_{n\rightarrow \infty}\sqrt[n]{R_n}\leq \sqrt{\lambda+\epsilon}<\lambda$, and this is a contradiction to our assumption that $\lim\limits_{n\rightarrow \infty}\sqrt[n]{R_n}=\lambda>1$.
\end{proof}

\section{Appendix}
For the reader's convenience, we provide a proof of the well-known inequality we used the proof of Proposition \ref{label_th_E}.
\begin{lemma}
$\sum_{k=0}^L\binom{N}{k}\leq \left(\frac{eN}{L}\right)^L$, where $L\leq N$ and $L,N\in \mathbb{N}$.
\end{lemma}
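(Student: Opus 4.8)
The plan is to use the standard trick of dominating the partial binomial sum by a full binomial expansion of $(1+x)^N$ for a well-chosen parameter $x$, and then optimizing over $x$. The only genuine content is the choice of $x$; everything else is a one-line manipulation.

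First I would dispose of the trivial case $L=0$ (where both sides equal $1$) and assume $L\geq 1$. Then, for any real $x$ with $0<x\leq 1$, I would note that whenever $0\leq k\leq L$ the exponent $k-L$ is nonpositive, so $x^{k-L}\geq 1$. Multiplying each term of the partial sum by $x^{k-L}\geq 1$ can only increase it, giving
\[
\sum_{k=0}^{L}\binom{N}{k}\leq \sum_{k=0}^{L}\binom{N}{k}x^{k-L}=x^{-L}\sum_{k=0}^{L}\binom{N}{k}x^{k}.
\]
Next I would extend the range of summation from $k\leq L$ to $k\leq N$; since every added term $\binom{N}{k}x^{k}$ is nonnegative this preserves the inequality, and the binomial theorem collapses the full sum to closed form:
\[
\sum_{k=0}^{L}\binom{N}{k}\leq x^{-L}\sum_{k=0}^{N}\binom{N}{k}x^{k}=x^{-L}(1+x)^{N}.
\]

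The remaining step is to pick $x$ so that the right-hand side takes the stated shape. The choice $x=L/N$ lies in $(0,1]$ because $L\leq N$, and it yields $x^{-L}(1+x)^{N}=\left(\tfrac{N}{L}\right)^{L}\left(1+\tfrac{L}{N}\right)^{N}$. Finally I would invoke the elementary inequality $1+t\leq e^{t}$ in the form $\left(1+\tfrac{L}{N}\right)^{N}\leq e^{L}$ to conclude
\[
\sum_{k=0}^{L}\binom{N}{k}\leq \left(\frac{N}{L}\right)^{L}e^{L}=\left(\frac{eN}{L}\right)^{L}.
\]

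I do not expect any real obstacle here; the single point requiring foresight is the selection of $x$. The exact minimizer of $x^{-L}(1+x)^{N}$ on $(0,1]$ is $x=L/(N-L)$, but that complicates the algebra, so the slightly suboptimal yet much cleaner value $x=L/N$ is what makes the final bound come out in precisely the claimed form. The only facts that then need verification are $x=L/N\in(0,1]$ and $(1+L/N)^{N}\leq e^{L}$, both of which are immediate.
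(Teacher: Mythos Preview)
Your argument is correct and is essentially the same as the paper's: both bound the partial sum by $x^{-L}(1+x)^N$ via the binomial theorem and the observation that $x^{k-L}\geq 1$ for $x\in(0,1]$, then set $x=L/N$ and apply $1+x\leq e^x$. The only cosmetic differences are the order in which you apply the two inequalities and your explicit handling of the $L=0$ case.
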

\begin{proof}
Consider $x\in (0,1]$. The binomial theorem states that $$(1+x)^N=\sum_{k=0}^N\binom{N}{k}x^k\geq \sum_{k=0}^L\binom{N}{k}x^k\mbox{.}$$
By dividing by the factor $x^L$ we obtain
$$ \sum_{k=0}^L\binom{N}{k}x^{k-L}\leq \frac{(1+x)^N}{x^L}\mbox{.}$$
Since $x\in (0,1]$ and $k-L\leq 0$, then $x^{k-L}\geq 1$, it follows that
$$ \sum_{k=0}^L\binom{N}{k}\leq \frac{(1+x)^N}{x^L}\mbox{.}$$
Let us substitute $x=\frac{L}{N}\in (0,1]$ and let us exploit the inequality $1+x<e^x$, that holds for all $x>0$:
$$ \frac{(1+x)^N}{x^L}\leq \frac{e^{xN}}{x^L}=\frac{e^{\frac{L}{N}N}}{(\frac{L}{N})^L}=\left(\frac{eN}{L}\right)^L \mbox{.}$$
\end{proof}

\section*{Acknowledgments}

The author wishes to thank Edita Pelantová and Štěpán Starosta for their useful comments. 
The authors acknowledges support by the Czech Science
Foundation grant GA\v CR 13-03538S and by the Grant Agency of the Czech Technical University in Prague, grant No. SGS14/205/OHK4/3T/14.

\bibliographystyle{siam}
\IfFileExists{biblio.bib}{\bibliography{biblio}}{\bibliography{../!bibliography/biblio}}

\end{document}